\documentclass[a4paper,11pt]{article}
\usepackage{mathrsfs}
\usepackage{latexsym}
\usepackage{amsmath,amssymb}
\usepackage{amsthm}
\usepackage{amsfonts}
\usepackage[usenames]{color}
\usepackage{amssymb}
\usepackage{graphicx}
\usepackage{amsmath}
\usepackage{amsfonts}
\usepackage{amsthm}
\usepackage{mathrsfs}
\usepackage{dsfont}
\usepackage{indentfirst}
\usepackage{xcolor}

\def\phi{\varphi}

\newtheoremstyle{mythm}{1.5ex plus 1ex minus .2ex}{1.5ex plus 1ex
minus .2ex}{\kai}{\parindent}{\song\bfseries}{}{1em}{}
\numberwithin{equation}{section}
\newtheorem{definition}{Definition}[section]
\newtheorem{theorem}{Theorem}[section]
\newtheorem{lemma}{Lemma}[section]
\newtheorem{example}{Example}[section]
\newtheorem{proposition}{Proposition}[section]

\allowdisplaybreaks[4]

\begin{document}
\title{A sharp curvature lower bound for the first exterior p-harmonic Steklov eigenvalue}
\author{Yating Niu\footnote{Department of Mathematics, Nanjing Forestry University, Nanjing 210037, China. E-mail: ytniu@njfu.edu.cn}  and Tao Wang\footnote{Beijing International Center for Mathematical Research, Peking University, Beijing 100871, China. E-mail: taowang25@pku.edu.cn}}
\date{}
\maketitle

\begin{abstract}
In this paper, we study the first variational Steklov eigenvalue of the $ p$-Laplace equation on exterior domain $ \Omega^{\text{ext}}$ for $ 1< p <n$. If $ \Omega$ is convex and $ \partial \Omega\in C^{1,1}$, we prove a sharp lower bound in terms of $p$-logarithmic mean of the principal curvatures of $ \partial \Omega$. For the linear case $ p=2$, our estimate reduces to the logarithmic mean bound of Bundrock et al. \cite{LADMI2026}.  We also derive an upper bound in terms of a boundary isocapacitary constant. The analysis relies on the established finite energy theory on exterior domains, together with a decay estimate for $p$-harmonic extensions. 
\end{abstract}

\textbf{MSC(2020):} 35P15, 35P30, 35J25 

\textbf{Keywords:} Exterior domain; $p$-Laplace equation;  Steklov eigenvalue

\section{Introduction}

The Steklov eigenvalue problem arises from the Dirichlet--to--Neumann operator \cite{Steklov1902}. For a bounded Lipschitz domain $\Omega \subset \mathbb{R}^n$, the Steklov eigenvalue problem in $\Omega$ is given by 
\begin{equation*}
 \begin{cases}
  \Delta u(x) = 0, \quad &x \in \Omega, \\
  \frac{\partial u}{\partial \bar{\nu}}(x) = \sigma u(x), &x \in \partial \Omega,
 \end{cases}
\end{equation*}
where $\bar{\nu}$ is the unit normal pointing towards the exterior of $\Omega$. This problem has been extensively investigated on compact Riemannian manifolds; we refer the reader to the comprehensive surveys \cite{CGGS2024, GP2017} and the references therein for a more detailed account. More recently, Xiong \cite{Xiong2023} and Bundrock et al. \cite{LADMI2026} studied the exterior Steklov problem on Euclidean spaces, establishing results on nodal domains as well as upper and lower bounds for the first eigenvalue.

The nonlinear generalization of the Laplacian to the $p$-Laplacian, defined by $\Delta_p u = \mathrm{div}(|\nabla u|^{p-2}\nabla u)$ with $p > 1$, is fundamental in the study of non-Newtonian fluids and in geometric analysis. The Steklov eigenvalue problem can be analogously formulated for the $p$-Laplacian as well. For recent developments concerning the $p$-Laplacian Steklov problem on bounded Lipschitz domains, we refer to\cite{Le2006, MartinezRossi2002, Provenzano2022, Torne2005, Verma2020, WangWang2026}.

Inspired by the works \cite{LADMI2026, Han2016}, we investigate in this paper the exterior Steklov problem for the $p$-Laplacian on a Euclidean domain. Let $\Omega \subset \mathbb{R}^n$ be a bounded Lipschitz domain. Denote by
\[
 \Omega^{\text{ext}}:= \mathbb{R}^n \setminus \overline{\Omega}
\]
the exterior domain of $\Omega$. Throughout this paper, we assume that $\Omega^{\text{ext}}$ is connected. We first consider the following $p$-Laplace equation 
\begin{equation}\label{f1}
\begin{cases}
-\Delta_p u = 0 \quad & x \in \Omega^{\text{ext}}, \\
u(x) = g \qquad &  x \in \partial\Omega,
\end{cases}
\end{equation}
where $ g\in W^{1-\frac{1}{p}, p}(\partial\Omega)$. See Section~\ref{sec:preliminaries} for the definition of fractional Sobolev spaces.

Auchmuty and Han \cite{AH2014} introduced the space $E^{1, p}(\Omega^{\text{ext}})$ and proved that equation \eqref{f1} admits a unique solution in $E^{1, p}(\Omega^{\text{ext}})$; the definition of this space is given in Section~\ref{sec:preliminaries}. Indeed, if $u \in E^{1, p}(\Omega^{\text{ext}})$ is a solution of equation \eqref{f1}, then $u = O(|x|^{-\frac{n-p}{p-1}})$ as $|x| \to \infty$; see Proposition~\ref{prop:decay-estimate}. Consequently, for any $1 < p< n$, we may consider the following exterior Steklov problem for the $p$-Laplacian on a Euclidean domain.

\begin{definition}
The exterior Steklov problem on $ \Omega^{\text{ext}} \subset \mathbb{R}^n$ is defined as follows: find $ \sigma \in \mathbb{R}$ for which there exists a nonzero $ u$: $ \Omega^{\text{ext}} \to \mathbb{R}$ such that $ u \in W^{1,p}_{\text{loc}}(\Omega^{\text{ext}})$, and 
\begin{equation}\label{f9}
\begin{cases}
-\Delta_p u = 0 \quad &\text{ in } \ \Omega^{\text{ext}}, \\
|\nabla u|^{p-2} \frac{\partial u}{\partial \nu} = \sigma |u|^{p-2} u  &\text{ on } \  \partial\Omega, \\
u(x)= O(|x|^{-\frac{n-p}{p-1}}) \ \text{ as } \ |x| \to \infty,
\end{cases}\tag{ES}
\end{equation}
where $\nu = -\bar{\nu}$ is the unit normal pointing towards the exterior of $\Omega^{\text{ext}}$.  
\end{definition}

We show that any solution $u$ to \eqref{f9} actually possesses finite energy and consequently belongs to the space $E^{1, p}(\Omega^{\text{ext}})$; see Proposition~\ref{prop:exactly-space}. We then turn to the first variational eigenvalue, namely
\[
 \sigma_{1,p}(\Omega^{\text{ext}}) = \inf_{\substack{u \in E^{1,p}(\Omega^{\text{ext}}) \\ u \neq 0}} \frac{\displaystyle\int_{\Omega^{\text{ext}}} |\nabla u|^p \, \mathrm{d}x}{\displaystyle\int_{\partial \Omega} |u|^p \, \mathrm{d}S}.
\]
For any $1 < p < n$, we have $\sigma_{1, p}(\Omega^{\text{ext}}) > 0$, see \cite{Han2016}. We show that $\sigma_{1, p}(\Omega^{\text{ext}})$ is bounded from below by the minimum of the logarithmic mean of the principal curvatures.

Let $ k\in \mathbb{N}$, and $ \alpha_1, \cdots, \alpha_k$ be nonnegative real numbers. Following \cite{nvariable}, we define their  $p$-logarithmic mean $\mathcal{L}_p$ in $ k$ variables with respect to the Dirichlet measure as
\[
 \mathcal{L}_p(\alpha_1, \cdots, \alpha_k):= \left[\frac{\Gamma\left(\frac{k}{p-1}\right)}{\Gamma\left(\frac{1}{p-1}\right)^{k}}\int_{S_{k-1}}\frac{\prod_{j=1}^{k}u_j^{\frac{1}{p-1}-1}}{\sum_{j=1}^{k}\alpha_j u_j} \, \mathrm{d}u_1\cdots \mathrm{d}u_{k-1}\right]^{-1},
\]
where $S_{k-1} = \{u_j \geq 0: \sum_{j=1}^{k}u_j = 1\}$ is the $(k-1)$-dimensional standard simplex. The Dirichlet measure serves as an important example of a probability measure on $ S_{k-1}$. In fact, if precisely $q$ of the constants $ \alpha_1, \cdots, \alpha_k$ are positive and the remaining $ k-q$ are zero, then the integral over $ S_{k-1}$ diverges to infinity when $q \leq p-1$, in which case we set $ \mathcal{L}_p =0$; when $ q > p-1$, the integral is finite, and consequently $ \mathcal{L}_p >0$.
Note that when $p = 2$ and $\alpha_1, \cdots, \alpha_k$ are distinct positive real numbers, $\mathcal{L}_p(\alpha_1, \cdots, \alpha_k)$ coincides with the quantity $L(\alpha_1, \cdots, \alpha_k)$ defined in \cite{LADMI2026}. 

For $ j= 1, \cdots, n-1$ and $ s\in \partial \Omega$, let $ \kappa_j(s)$ denote the $ j$-th principal curvature of $ \partial \Omega$, where defined. We adopt the convention that the principal curvatures are nonnegative if $ \Omega$ is convex. We generalize the logarithmic-mean curvature bound established by Bundrock et al. \cite{LADMI2026} for the linear case $p = 2$ to the nonlinear setting $1 < p <n$.

\begin{theorem}\label{f8}
Suppose that $ \Omega \subset \mathbb{R}^n$ is a bounded convex domain with $ \partial \Omega \in C^{1,1}.$ Then for any $1 < p <n$,
\begin{equation}\label{equ:lower_bound}
\sigma_{1,p}(\Omega^{\text{ext}}) \geq \inf_{s\in \partial \Omega} \left[\frac{n-p}{p-1}\mathcal{L}_p(\kappa_1(s), \cdots, \kappa_{n-1}(s))\right]^{p-1}.
\end{equation}
\end{theorem}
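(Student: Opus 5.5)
This lower bound will follow from a pointwise estimate along normal rays, using the coordinates given by the exterior normal map of a convex body. Since $\Omega$ is convex with $\partial\Omega\in C^{1,1}$, the map
\[
\Phi:\partial\Omega\times(0,\infty)\to\Omega^{\text{ext}},\qquad \Phi(s,t)=s+t\bar\nu(s),
\]
is a bi-Lipschitz homeomorphism on compact pieces. Its Jacobian is $\mathrm{d}x=\prod_{j=1}^{n-1}(1+t\kappa_j(s))\,\mathrm{d}t\,\mathrm{d}S(s)$ for a.e.\ $s$, where the $\kappa_j$ exist a.e.\ by Rademacher's theorem applied to the Lipschitz normal.

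Fix $u\in E^{1,p}(\Omega^{\text{ext}})$. By density, and using the decay from Proposition~\ref{prop:decay-estimate} together with the finite-energy theory of Section~\ref{sec:preliminaries}, we may assume $u$ is smooth up to $\partial\Omega$ and compactly supported. For each $s$ write
\[
J_s(t)=\prod_j(1+t\kappa_j(s)), \qquad f(t)=u(\Phi(s,t)).
\]
Then
\[
|u(s)|=\Bigl|\int_0^\infty f'(t)\,\mathrm{d}t\Bigr|\le \int_0^\infty |\nabla u(\Phi(s,t))|\,\mathrm{d}t .
\]
By Hölder's inequality with weights $J_s^{1/p}$ and $J_s^{-1/p}$,
\[
|u(s)|^p\le \Bigl(\int_0^\infty |\nabla u|^p J_s\,\mathrm{d}t\Bigr)\Bigl(\int_0^\infty J_s(t)^{-\frac{1}{p-1}}\,\mathrm{d}t\Bigr)^{p-1}.
\]
Integrating over $\partial\Omega$ and using the change of variables then gives
\[
\int_{\partial\Omega}|u|^p\,\mathrm{d}S\le \sup_s I(s)^{p-1}\int_{\Omega^{\text{ext}}}|\nabla u|^p\,\mathrm{d}x,\qquad I(s):=\int_0^\infty\prod_{j=1}^{n-1}(1+t\kappa_j(s))^{-\frac{1}{p-1}}\,\mathrm{d}t .
\]
Hence $\sigma_{1,p}\ge \inf_s I(s)^{-(p-1)}$. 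When $I(s)=\infty$ the bound is trivial, which matches the convention $\mathcal L_p=0$ for that $s$.

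The heart of the argument is the identity
\[
I(s)=\Bigl[\tfrac{n-p}{p-1}\,\mathcal L_p(\kappa_1(s),\dots,\kappa_{n-1}(s))\Bigr]^{-1}.
\]
To prove it, set $a=\frac{1}{p-1}$ and $k=n-1$, and use the Gamma-function representation
\[
(1+t\kappa_j)^{-a}=\Gamma(a)^{-1}\int_0^\infty y_j^{a-1}e^{-y_j(1+t\kappa_j)}\,\mathrm{d}y_j .
\]
Multiplying these over $j$, integrating in $t$ first gives the factor $\bigl(\sum_j\kappa_jy_j\bigr)^{-1}$, leaving an integral of $\prod_j y_j^{a-1}e^{-\sum_j y_j}\big/\sum_j\kappa_jy_j$ over $(0,\infty)^k$. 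Next pass to polar coordinates on the simplex, $y=r u$ with $u\in S_{k-1}$ and $\mathrm{d}y=r^{k-1}\,\mathrm{d}r\,\mathrm{d}u$. The integrand then produces a factor $r^{ka-k}\,r^{-1}\,r^{k-1}e^{-r}=r^{ka-2}e^{-r}$, whose $r$-integral is $\Gamma(ka-1)$. Since $ka-1=\frac{n-1}{p-1}-1=\frac{n-p}{p-1}>0$ because $p<n$, and $\Gamma(ka)=(ka-1)\Gamma(ka-1)$, this yields
\[
I(s)=\frac{\Gamma(ka-1)}{\Gamma(a)^k}\int_{S_{k-1}}\frac{\prod_j u_j^{a-1}}{\sum_j\kappa_ju_j}\,\mathrm{d}u=\frac{p-1}{n-p}\,\mathcal L_p^{-1}.
\]
Tonelli's theorem justifies every interchange, since all integrands are nonnegative, including the divergent cases.

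I expect the main obstacle to be the technical justification rather than the computation. There are three points to check.

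\begin{enumerate}
\item The normal coordinates must cover $\Omega^{\text{ext}}$ bijectively under only $C^{1,1}$ regularity, with the area formula holding a.e. This is where convexity enters: the nearest-point projection onto the convex set $\overline{\Omega}$ is well defined everywhere outside it.
\item Approximation in $E^{1,p}$ must preserve both the numerator and the boundary trace integral, so that the inequality passes from smooth compactly supported $u$ to general $u\in E^{1,p}(\Omega^{\text{ext}})$.
\item The representation $u(s)=-\int_0^\infty f'$ requires $u\to0$ along each ray. This holds for compactly supported approximants, and the previously established decay and density results allow the reduction to them.
\end{enumerate}
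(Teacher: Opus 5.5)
Your proof is correct, and for the key one-dimensional step it takes a shorter route than the paper. Both arguments use the same normal coordinates $(s,t)\mapsto s+t\bar\nu(s)$, the same Jacobian $\zeta(s,t)=\prod_j(1+t\kappa_j(s))$, and the same reduction to the radial derivative.

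From there the paper defines the ray constant $K_p(s,\Omega^{\text{ext}})$ and extracts a minimizer by weak compactness in $W^{1,p}((0,R))$. It then derives and solves the Euler--Lagrange equation $(|f'|^{p-2}f'\zeta)'=0$, which gives the exact value $K_p=I_p(s)^{1-p}$.

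Your H\"older inequality along each ray gives $\int_0^\infty|f'|^p\zeta\,\mathrm{d}t\ge I_p(s)^{1-p}|f(0)|^p$ directly, for every admissible $f$. This is exactly the inequality $K_p\ge I_p^{1-p}$ that the lower bound needs. It removes any need for a one-dimensional minimizer or its Euler--Lagrange equation, and it covers the case $\mathcal{L}_p=0$ with no extra work. The paper actually uses this same H\"older estimate, but only as an auxiliary decay bound for its minimizing sequence.

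What the paper's route gains is the exact value of $K_p(s)$, showing that the ray-by-ray reduction loses nothing. You can get the same fact from the equality case of H\"older, with $f(t)=\int_t^\infty\zeta(s,r)^{-1/(p-1)}\,\mathrm{d}r$.

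Your Gamma-function computation is the paper's with the order of integration reversed. You integrate in $t$ first and get $\Gamma(\frac{n-p}{p-1})$ from the radial integral. The paper first proves the pointwise Dirichlet-average formula for $\zeta^{-1/(p-1)}$; at $t=0$ this also gives the normalization of the Dirichlet measure used in Example~\ref{exm:sharp}.

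One minor correction: Proposition~\ref{prop:decay-estimate} concerns $p$-harmonic functions. It is not what lets you reduce a general $u\in E^{1,p}(\Omega^{\text{ext}})$ to bounded-support functions smooth up to $\partial\Omega$. That reduction rests on two facts. First, density in the gradient norm: a cut-off $\eta_R$ works because $\|u\nabla\eta_R\|_{L^p}\le C\|u\|_{L^{p^*}(B_{2R}\setminus B_R)}\to0$. Second, continuity of the trace $E^{1,p}(\Omega^{\text{ext}})\to L^p(\partial\Omega)$.
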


For $\partial \Omega \in C^{1, 1}$, the principal curvatures are defined almost everywhere. Accordingly, the infimum in \eqref{equ:lower_bound} is taken over points where the curvatures are defined. If $\Omega$ is a ball, equality in \eqref{equ:lower_bound} holds.

We further investigate the relation between $\sigma_{1, p}$ and the isocapacitary constants. Using capacity to estimate eigenvalues can be traced back to the works of Maz'ya \cite{M1964, M2009, M1962}. Maz'ya introduced the isocapacitary constants and obtained upper and lower bounds for the first Dirichlet eigenvalue of the Laplacian on bounded domains. For the first Dirichlet eigenvalue of the $p$-Laplacian on bounded domains, Maz'ya type estimates also hold; see \cite{Gri1999}. We also mention \cite{HS2025, WangWang2026} for estimates of the Neumann and the Steklov eigenvalues of the $p$-Laplacian on bounded domains using the isocapacitary constants. 

For any nonempty set $A \subset \partial \Omega$ , define the class of admissible functions $\mathcal{S}(A)$ as 
\[
 \mathcal{S}(A):= \{u \in C^1(\Omega^{\text{ext}}) \cap C(\overline{\Omega^{\text{ext}}}): \mathrm{supp}\,(u) \text{ is bounded and } u|_A = 1\}.
\]
Consider the capacity of $A$ relative to $\partial \Omega$, that is, 
\[
 \mathrm{Cap}_p(A, \partial \Omega):= \inf_{u \in \mathcal{S}(A)}\int_{\Omega^{\text{ext}}}|\nabla u|^p \, \mathrm{d}x.
\]
Denote the area of $A$ by
\[
 \mathrm{Area}(A) = \int_A1 \, \mathrm{d}S,
\]
where $\mathrm{d}S$ is the $(n-1)$-dimensional Hausdorff measure on $\partial \Omega$. For any $1< p< n$, we define the isocapacitary constant $\Lambda_p(\partial \Omega)$ via
\[
 \Lambda_p(\partial \Omega):= \inf_{A \subset \partial \Omega}\frac{\mathrm{Cap}_p(A, \partial \Omega)}{\mathrm{Area}(A)}.
\]
Then we have the following theorem.
\begin{theorem}\label{thm:main_upper}
 Suppose that $\Omega \subset \mathbb{R}^n$ is a bounded and Lipschitz domain. Then for any $1 < p <n$,
 \[
  \sigma_{1, p}(\Omega^{\text{ext}}) \leq \Lambda_p(\partial \Omega).
 \]
\end{theorem}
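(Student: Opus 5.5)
The upper bound in Theorem~\ref{thm:main_upper} is a direct test-function argument in the variational characterization of $\sigma_{1,p}(\Omega^{\text{ext}})$. Fix a nonempty set $A\subset\partial\Omega$ with $\mathrm{Area}(A)>0$ (if $\mathrm{Area}(A)=0$ the quotient is $+\infty$ or undefined and the set can be ignored in the infimum) and an admissible function $u\in\mathcal{S}(A)$. It suffices to show
\[
\sigma_{1,p}(\Omega^{\text{ext}})\,\mathrm{Area}(A)\le \int_{\Omega^{\text{ext}}}|\nabla u|^p\,\mathrm{d}x .
\]
Taking the infimum over $u\in\mathcal{S}(A)$ then gives $\sigma_{1,p}\,\mathrm{Area}(A)\le \mathrm{Cap}_p(A,\partial\Omega)$, and taking the infimum over $A$ gives the theorem.

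\textbf{Step 1: make $u$ an admissible trial function.} I would first check that $u$, or a harmless modification of it, lies in $E^{1,p}(\Omega^{\text{ext}})$. The function $u$ has bounded support and is $C^1$ inside. If $\int_{\Omega^{\text{ext}}}|\nabla u|^p=\infty$ there is nothing to prove, so assume the energy is finite. Since $u$ is continuous up to the boundary and compactly supported, $u$ belongs to $L^p$ with any weight, so it lies in $E^{1,p}(\Omega^{\text{ext}})$ by the definition in Section~\ref{sec:preliminaries} (finite gradient energy plus the weighted $L^p$ condition). Its trace on $\partial\Omega$ is the continuous restriction $u|_{\partial\Omega}$.

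This is the step I expect to be the only delicate one. The difficulty is that $u\in C^1(\Omega^{\text{ext}})\cap C(\overline{\Omega^{\text{ext}}})$ need not be $W^{1,p}$ up to the boundary in a classical sense. Finite energy plus continuity on a bounded Lipschitz set handles this. If needed, I would approximate $u$ by $u_\varepsilon=(u-\varepsilon)_+/(1-\varepsilon)$ composed with a cutoff, using truncation to preserve admissibility, and then pass to the limit.

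\textbf{Step 2: bound the boundary integral from below.} I would replace $u$ by the truncation $v=\min(|u|,1)$. Truncation does not increase the energy, since $|\nabla v|\le|\nabla u|$ a.e. Also $v=1$ on $A$, so $v$ is still admissible in the sense of Step 1. Because $v\ge0$ and $v=1$ on $A$,
\[
\int_{\partial\Omega}|v|^p\,\mathrm{d}S\ge \mathrm{Area}(A)>0,
\]
and in particular $v\neq0$. Truncation is not strictly needed, since $|u|^p\ge0$ already gives $\int_{\partial\Omega}|u|^p\ge\mathrm{Area}(A)$; it only serves to keep the trial function well controlled.

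\textbf{Step 3: conclude.} Plugging $v$ (or $u$) into the Rayleigh quotient defining $\sigma_{1,p}(\Omega^{\text{ext}})$ gives
\[
\sigma_{1,p}(\Omega^{\text{ext}})\le \frac{\int_{\Omega^{\text{ext}}}|\nabla u|^p\,\mathrm{d}x}{\int_{\partial\Omega}|u|^p\,\mathrm{d}S}\le \frac{\int_{\Omega^{\text{ext}}}|\nabla u|^p\,\mathrm{d}x}{\mathrm{Area}(A)}.
\]
This is the inequality reduced to at the start, and the two infima over $u$ and $A$ then yield $\sigma_{1,p}(\Omega^{\text{ext}})\le\Lambda_p(\partial\Omega)$.
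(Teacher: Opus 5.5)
Your argument is correct and is essentially the paper's proof. Plug an admissible $u\in\mathcal{S}(A)$ into the Rayleigh quotient and use $\int_{\partial\Omega}|u|^p\,\mathrm{d}S\ge\mathrm{Area}(A)$; $u$ lies in $E^{1,p}(\Omega^{\text{ext}})$ because it has bounded support and finite energy. The only differences are that you take the infima over $u$ and $A$ directly where the paper uses $\varepsilon$-near-minimizers. The truncation and approximation steps are also unnecessary, and membership in $E^{1,p}(\Omega^{\text{ext}})$ here comes from condition (iii), that $\{|u|\ge C\}$ has finite measure, not from a weighted $L^p$ condition.
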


It should be noted that both the lower bound in Theorem~\ref{f8} and the upper bound in Theorem~\ref{thm:main_upper} are sharp; see Example~\ref{exm:sharp}.


This paper is organized as follows. In Section \ref{sec:preliminaries}, we introduce some concepts and notations. In Section \ref{sec:eigenvalue}, we establish upper and lower bounds for the eigenvalues, and recover the explicit eigenvalue $ \sigma_{1,p}( B_R^{\text{ext}})$. This section also contains the proofs of Theorem \ref{f8} and Theorem \ref{thm:main_upper}.

\section{Preliminaries}\label{sec:preliminaries}
In this section, we introduce the basic notions and prove the comparison principle for the exterior domain. 

\begin{definition}
We say that $ u \in W^{1,p}_{\text{loc}}(\Omega^{\text{ext}})$ is a weak solution of problem \eqref{f1}, if $ u=g$ on $\partial \Omega$ in the sense of traces and 
\begin{equation}\label{f3}
\int_{\Omega^{\text{ext}}} |\nabla u|^{p-2} \nabla u \cdot \nabla \phi \, \mathrm{d}x =0
\end{equation}
for all $ \phi \in C_c^{\infty}(\Omega^{\text{ext}})$.
\end{definition}

For any domain $U \subset \mathbb{R}^n$, We recall some results related to the fractional Sobolev space $W^{s,p}(U).$ Let $ s\in (0,1)$, $ 1< p < \infty$ and set
\[
W^{s,p}(U) = \left\{u\in L^p(U) \ | \ [u]_{s,p} < \infty\right\}, 
\]
where the Gagliardo seminorm $ [u]_{s,p}$ is defined as
\[
[u]_{s,p} = \left(\iint_{U \times U} \frac{|u(x) - u(y)|^p}{|x-y|^{n+sp}} \, \mathrm{d}x\mathrm{d}y\right)^{\frac{1}{p}}.
\]
Equipped with the norm
\[
\| u \|_{W^{s,p}(U)} = \left( \int_{U} |u|^p \, \mathrm{d}x + \iint_{U \times U} \frac{|u(x) - u(y)|^p}{|x-y|^{n+sp}} \, \mathrm{d}x\mathrm{d}y\right)^{\frac{1}{p}},
\]
$ W^{s,p}(U)$ is a uniformly convex Banach space, and hence reflexive. A more detailed account of the properties of $ W^{s,p}(U)$ can be found in \cite{12Hitch}. For a bounded open set $ \Omega $, Gagliardo \cite{Gag57} proved that the trace operator $\mathrm{Tr}$ is linear, continuous and surjective from $ W^{1,p}(\Omega)$ onto $ W^{1-\frac{1}{p}, p}(\partial \Omega)$.


For any $1 < p <n$, we then recall the space $E^{1, p}$ introduced by Auchmuty and Han \cite{AH2014}. Although they initially consider $n \geq 3$, they note that the space is also well-defined for $n = 2$. Let $\Omega \subset \mathbb{R}^n$ be a bounded Lipschitz domain with connected $\Omega^{\text{ext}}$. We consider Lebesgue measurable functions $ u: \Omega^{\text{ext}} \to \mathbb{R}$ satisfying the following three conditions:
\begin{itemize}
    \item[(i)] $u \in L^1_{\text{loc}}(\Omega^{\text{ext}})$, 
    \item[(ii)] $\nabla u \in L^p(\Omega^{\text{ext}})$,
    \item[(iii)] $ \{x\in \Omega^{\text{ext}} : |u(x)| \geq C \}$ has finite Lebesgue measure for any $ C>0$. 
\end{itemize}
The set of all such Lebesgue measurable functions that satisfy (i)-(iii) be denoted $ E^{1,p} (\Omega^{\text{ext}})$. The functions in $ E^{1,p} (\Omega^{\text{ext}})$ are said to have finite $p$-energy on $\Omega^{\text{ext}}$. $E^{1, p}(\Omega^{\text{ext}})$ is a real Banach space with respect to the gradient $L^p$-norm
\[
\|\nabla u\|_{L^p(\Omega^{\text{ext}})} = \left( \int_{\Omega^{\text{ext}}} |\nabla u|^p \, \mathrm{d}x \right)^{\frac{1}{p}}. 
\]
Note that $W^{1,p}(\Omega^{\text{ext}}) \subset E^{1,p}(\Omega^{\text{ext}})\subset L^{p^*}(\Omega^{\text{ext}})$ with $ p^* = \frac{np}{n-p}$ (see \cite[Section 8.3]{Lieb01} and \cite[Corollary 3.3]{AH2014}).

In the following, we prove the decay estimate for solutions of equation \eqref{f1}.
\begin{proposition}\label{prop:decay-estimate}
 Suppose that $u \in E^{1, p}(\Omega^{\text{ext}})$ is a solution of equation \eqref{f1}, then $u = O(|x|^{-\frac{n-p}{p-1}})$ as $|x| \to \infty$.
\end{proposition}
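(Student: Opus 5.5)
Fix $R_0$ with $\overline{\Omega}\subset B_{R_0}$. The plan is to compare $u$ with the fundamental-type radial $p$-harmonic function
\[
\Phi(x)=|x|^{-\frac{n-p}{p-1}},
\]
which is $p$-harmonic in $\mathbb{R}^n\setminus\{0\}$ and tends to $0$ at infinity because $1<p<n$. We use the comparison principle on the exterior region $\{|x|>R_0\}$. Two ingredients are needed: (a) $u$ is bounded on the sphere $\partial B_{R_0}$, and (b) $u\to 0$ at infinity in a sense strong enough to run a comparison argument on unbounded domains within the class $E^{1,p}$.

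For (a), $u$ is $p$-harmonic in $\Omega^{\text{ext}}$, so it is locally bounded and in fact $C^{1,\alpha}_{\text{loc}}$ by the regularity theory of DiBenedetto, Tolksdorf and Serrin. Hence $M:=\max_{\partial B_{R_0}}|u|<\infty$.

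For (b), we use $u\in E^{1,p}\subset L^{p^*}(\Omega^{\text{ext}})$. Apply the local $L^\infty$ bound for $p$-subsolutions (Moser/Serrin) to $|u|$, or to $u^\pm$, on balls $B_{|x|/2}(x)$ with $|x|>2R_0$:
\[
\sup_{B_{r/4}(x)}|u|\le C\Big(r^{-n}\int_{B_{r/2}(x)}|u|^{p^*}\Big)^{1/p^*}\le C r^{-\frac{n-p}{p}}\|u\|_{L^{p^*}(\{|y|>r/2\})}, \qquad r=|x|.
\]
This gives $\sup_{|x|=r}|u|\to0$ as $r\to\infty$, which is a preliminary decay of order $r^{-(n-p)/p}$.

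With (a) and (b) in hand, set $w=M R_0^{\frac{n-p}{p-1}}\Phi$, so that $w\ge |u|$ on $\partial B_{R_0}$. For $\varepsilon>0$, choose $R$ large so that $|u|<\varepsilon$ on $\partial B_R$. On the annulus $B_R\setminus\overline{B_{R_0}}$ we have $u\le w+\varepsilon$ on the boundary, and $w+\varepsilon$ is $p$-harmonic. The standard weak comparison principle on bounded domains, obtained by testing with $(u-w-\varepsilon)^+$, therefore gives $u\le w+\varepsilon$ in the annulus. Letting $R\to\infty$ and then $\varepsilon\to0$ yields $u\le w$ on $\{|x|>R_0\}$. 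The same argument applied to $-u$, which is also $p$-harmonic, gives $|u|\le M R_0^{\frac{n-p}{p-1}}|x|^{-\frac{n-p}{p-1}}$, which is the claimed estimate. Alternatively, if the paper's exterior comparison principle for $E^{1,p}$ functions is available, one can apply it directly on $\{|x|>R_0\}$ to $u$ and $w$, since both lie in $E^{1,p}$ there: $\nabla w\in L^p$ because $|\nabla w|^p\sim |x|^{-\frac{(n-1)p}{p-1}}$ and $\frac{(n-1)p}{p-1}>n$.

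The main obstacle is step (b), making the vanishing at infinity rigorous for a general $E^{1,p}$ solution. The key point is that the $L^{p^*}$ tail over $\{|y|>r/2\}$ tends to zero. The only delicate part is citing the sup bound for subsolutions with the exponent $p^*$ in place of $p$. Since $p^*>p$, this follows by combining the $L^p$ version with Hölder's inequality on the ball, and that is what I would do.
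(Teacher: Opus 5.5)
Your proof is correct and follows essentially the same route as the paper. You first show $\sup_{\partial B_R}|u|\to 0$ from $E^{1,p}(\Omega^{\text{ext}})\subset L^{p^*}(\Omega^{\text{ext}})$ and the local sup estimate (balls $B_{|x|/2}(x)$ where the paper uses annuli $B_{2R}\setminus B_R$), then compare $\pm u$ on annuli with the barrier $M(R_0/|x|)^{\frac{n-p}{p-1}}+\varepsilon$, and are in fact slightly more careful than the paper about requiring $\overline{\Omega}\subset B_{R_0}$ and about taking $R\to\infty$ before $\varepsilon\to 0$.
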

\begin{proof}
We first claim that 
$$ u(x) \rightarrow 0 \quad \text{as} \quad  |x| \rightarrow + \infty.$$

From the embedding, $E^{1,p}(\Omega^{\text{ext}})\subset L^{p^*}(\Omega^{\text{ext}})$, it follows that $u \in L^{p^*}(\Omega^{\text{ext}})$. By local boundedness estimates for $ p$-harmonic functions (\cite[Lemma 3.6]{Notes}), for large $ R$, one obtains
\[
\operatorname{ess\,sup}_{B_{2R} \setminus B_R} |u(x)| \leq C(n,p)R^{-\frac{n}{p^*}} \|u\|_{L^{p^*}(B_{4R} \setminus B_{R/2})}. 
\]
Since $ u \in L^{p^*}(\Omega^{\text{ext}})$, taking the limit $ R \rightarrow \infty$ on the right-hand side, we infer that 
\begin{equation}\label{f6}
\sup_{\partial B_{R}} |u(x)| \rightarrow 0, \quad \text{as} \ R \rightarrow \infty. 
\end{equation}

Next, we prove the decay estimate for $ u(x)$. Take a positive constant $ R_0$ and
set $
M= \sup_{\partial B_{R_0}} |u(x)|$. By the $ C_{\text{loc}}^{1+\alpha}$ regularity estimates for $ p$-Laplace equation \cite{DiBenedetto}, we know that $ M $ is finite. For any $ \varepsilon > 0$, define
\[
\psi_{\varepsilon}(x):= M\left(\frac{R_0}{|x|}\right)^{\beta} + \varepsilon,
\]  
where
\[
\beta:= \frac{n-p}{p-1}.
\]
It is clear that 
\[
-\Delta_p \psi_{\varepsilon}(x) =0, \quad |x| > R_0. 
\]
By applying \eqref{f6}, we can choose a large constant $ R_1 > R_0 >0$ so that 
\[
|u(x)| \leq \varepsilon, \quad \text{for} \ x\in \partial B_{R_1}.
\]
Then $ u(x)$ and $ \psi_{\varepsilon}(x)$ satisfy
\begin{equation}
\begin{cases}
-\Delta_p u = -\Delta_p \psi_{\varepsilon} = 0 \quad & x \in B_{R_1}\setminus B_{R_0} , \\
u(x) \leq  \psi_{\varepsilon}(x) \qquad &  x \in \partial B_{R_0}, \\
u(x) \leq  \psi_{\varepsilon}(x) \qquad &  x \in \partial B_{R_1}. \nonumber 
\end{cases}
\end{equation}
By using the comparison principle (\cite[Theorem 2.15]{Notes}), one obtains
\[
u(x) \leq  \psi_{\varepsilon}(x), \quad x\in B_{R_1}\setminus B_{R_0}.
\]
Similarly, repeating the above process for $ -u(x)$, we deduce that
\[
|u(x)| \leq  \psi_{\varepsilon}(x), \quad x\in B_{R_1}\setminus B_{R_0}.
\]
Letting $ \varepsilon \rightarrow 0$, one has 
\[
|u(x)| \leq M R_0^{\beta} |x|^{-\beta} = C |x|^{-\beta}. 
\]
Thus, $ u(x) = O(|x|^{-\frac{n-p}{p-1}})$ as $|x| \to \infty$. We complete the proof.
\end{proof}

At the end of this section, we introduce a comparison principle on an exterior domain, by means of which we can prove that the solutions of \eqref{f9} have finite $ p$-energy. 

\begin{lemma}[Comparison principle]\label{f11}
Suppose that $ u$,$ v \in W^{1,p}_{\text{loc}}(\Omega^{\text{ext}}) $ are $ p$-harmonic functions in an exterior domain $ \Omega^{\text{ext}} \subset \mathbb{R}^n$. If for any $\xi \in \partial \Omega$
\[
\limsup_{x\rightarrow \xi, \, x\in \Omega^{\text{ext}} } (u(x) - v(x)) \leq 0
\]
and 
\[
\limsup_{|x| \rightarrow \infty, \, x\in \Omega^{\text{ext}}} (u(x) - v(x)) \leq 0,
\]
then $ u\leq v$ in $ \Omega^{\text{ext}}$. 
\end{lemma}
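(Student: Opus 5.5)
We reduce the exterior comparison principle to the classical comparison principle on bounded domains (\cite[Theorem 2.15]{Notes}, already used in the proof of Proposition~\ref{prop:decay-estimate}), using an $\varepsilon$-shift to handle the boundary at infinity. Fix $\varepsilon>0$. By the second hypothesis, there is $R_\varepsilon$ with $\overline{\Omega}\subset B_{R_\varepsilon}$ such that $u-v<\varepsilon$ on $\Omega^{\text{ext}}\setminus B_{R}$ for every $R\ge R_\varepsilon$. By the first hypothesis, for each $\xi\in\partial\Omega$ there is a neighbourhood $N_\xi$ with $u-v<\varepsilon$ on $N_\xi\cap\Omega^{\text{ext}}$. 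Compactness of $\partial\Omega$ gives finitely many such neighbourhoods, and hence a $\delta>0$ with $u-v<\varepsilon$ on $\{x\in\Omega^{\text{ext}}:\operatorname{dist}(x,\partial\Omega)<\delta\}$.

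Now consider the bounded open set $D=\{x\in\Omega^{\text{ext}}:\operatorname{dist}(x,\partial\Omega)>\delta/2\}\cap B_{R}$ with $R>R_\varepsilon$. Since $\Omega^{\text{ext}}$ is a domain, $u,v$ are $C^{1,\alpha}_{\text{loc}}$ there by \cite{DiBenedetto}, so both are continuous on $\overline{D}\subset\Omega^{\text{ext}}$. Every point of $\partial D$ lies either in the $\delta$-collar near $\partial\Omega$ or on $\partial B_R$, so $u\le v+\varepsilon$ on $\partial D$. Because $v+\varepsilon$ is again $p$-harmonic, the bounded-domain comparison principle (applied on each component of $D$, with $(u-v-\varepsilon)^+\in W^{1,p}_0(D)$ by continuity up to $\partial D$) yields $u\le v+\varepsilon$ in $D$. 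Points of $\Omega^{\text{ext}}$ not in $D$ lie in the collar or outside $B_R$, where $u-v<\varepsilon$ already holds. Hence $u\le v+\varepsilon$ on all of $\Omega^{\text{ext}}$. Letting $\varepsilon\to0$ gives $u\le v$.

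The main obstacle is making the boundary condition on $\partial D$ legitimate in the Sobolev sense required by \cite[Theorem 2.15]{Notes}: we need $(u-v-\varepsilon)^+\in W^{1,p}_0(D)$. Cutting at distance $\delta/2$ from $\partial\Omega$ avoids any regularity issue at $\partial\Omega$. On $\partial D$ we have the strict inequality $u-v-\varepsilon<0$ together with continuity, so $(u-v-\varepsilon)^+$ vanishes in a neighbourhood of $\partial D$ and is compactly supported in $D$. This makes it an admissible test function, and the standard monotonicity argument
\[
\int_D\big(|\nabla u|^{p-2}\nabla u-|\nabla v|^{p-2}\nabla v\big)\cdot\nabla (u-v-\varepsilon)^+\,\mathrm{d}x=0
\]
then forces $\nabla (u-v-\varepsilon)^+=0$. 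Since this function vanishes near $\partial D$, it is zero on each component of $D$, which gives the conclusion directly without even invoking the cited theorem.
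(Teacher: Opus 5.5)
Your proof is correct and follows the same route as the paper: use the condition at infinity to get $u\le v+\varepsilon$ outside a large ball, compare $u$ with the $p$-harmonic function $v+\varepsilon$ on a bounded truncation of $\Omega^{\text{ext}}$, and then let $\varepsilon\to0$. You add one step the paper skips. By compactness of $\partial\Omega$ you cut away a $\delta/2$-collar, and then use continuity and the strict inequality to get $(u-v-\varepsilon)^+\in W^{1,p}_0(D)$, so a direct monotonicity argument can replace the citation. This makes the inner-boundary condition rigorous. The paper instead asserts $u\le v+\varepsilon$ pointwise on $\partial\Omega$, where $u,v$ are only controlled through $\limsup(u-v)$.
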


\begin{proof}
For any $ x_0 \in \Omega^{\text{ext}}$ and $ \varepsilon > 0$, there exists a $ R_\varepsilon$ such that 
\[
u(x) \leq v(x) + \varepsilon, \quad x\in \Omega^{\text{ext}}, \quad |x| \geq R_\varepsilon.
\]
We choose $ R > 0$ large enough so that 
\[
\overline{\Omega} \subset B_R, \qquad x_0 \in B_R\qquad  \text{and } \qquad R>R_\varepsilon.
\]
For the bounded domain $ \Omega^{\text{ext}}_R:=\Omega^{\text{ext}} \cap B_R$, on the inner boundary $ \partial \Omega \cap B_R$ and outer boundary $ \Omega^{\text{ext}} \cap \partial B_R$, one obtains 
\[
u(x) \leq v(x) + \varepsilon. 
\]
Since $ u(x)$ and $ v(x) + \varepsilon $ are $ p$-harmonic functions in $ \Omega^{\text{ext}}_R$, the comparison principle for bounded domains (\cite[Theorem 2.15]{Notes}) yields
\[
u(x) \leq v(x) + \varepsilon, \qquad \text{in} \ \Omega^{\text{ext}}_R.
\]
Letting $ \varepsilon \to 0$, since $ x_0 \in \Omega^{\text{ext}}$ is arbitrary, we have $ u\leq v$ in $ \Omega^{\text{ext}}$.
\end{proof}

Then, by applying the comparison principle and Proposition \ref{prop:decay-estimate}, we prove that any solution of the exterior Steklov problem has finite $p$-energy, that is, belongs to the space $ E^{1,p}(\Omega^{\text{ext}})$.

\begin{proposition}\label{prop:exactly-space}
Every solution of \eqref{f9} belongs to $ E^{1,p}(\Omega^{\text{ext}})$.
\end{proposition}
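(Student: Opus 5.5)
Let $u$ be a solution of \eqref{f9}. Conditions (i) and (iii) in the definition of $E^{1,p}(\Omega^{\text{ext}})$ are immediate. Since $u\in W^{1,p}_{\text{loc}}(\Omega^{\text{ext}})$, we have $u\in L^1_{\text{loc}}$. The decay $|u(x)|\le C|x|^{-\beta}$ for $|x|\ge R_0$, with $\beta=\frac{n-p}{p-1}>0$, shows that for each $C'>0$ the set $\{|u|\ge C'\}$ is contained in a bounded set, hence has finite measure. The real task is (ii), namely $\nabla u\in L^p(\Omega^{\text{ext}})$. Near $\partial\Omega$ this is part of the standing regularity of a Steklov solution: the boundary condition is understood weakly, so I assume $u\in W^{1,p}(\Omega^{\text{ext}}\cap B_R)$ for every $R$. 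The question is therefore integrability of $|\nabla u|^p$ at infinity.

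I will obtain gradient decay from the pointwise decay by rescaling. For $|x_0|=r$ large, set $u_r(y)=u(x_0+\frac r2 y)$ on $B_1$. This function is $p$-harmonic, and by Proposition~\ref{prop:decay-estimate}-type information (in fact directly from the third line of \eqref{f9}) it satisfies $\sup_{B_1}|u_r|\le C r^{-\beta}$. The interior $C^{1,\alpha}$ estimate of DiBenedetto, already used in Proposition~\ref{prop:decay-estimate}, gives $\sup_{B_{1/2}}|\nabla u_r|\le C\sup_{B_1}|u_r|$; this estimate is homogeneous, since $p$-harmonicity is invariant under multiplication by constants. Scaling back yields
\[
|\nabla u(x_0)|\le C r^{-1}r^{-\beta}=C|x_0|^{-\frac{n-1}{p-1}}.
\]

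It remains to integrate. For $|x|\ge R$ we have $|\nabla u|^p\le C|x|^{-\frac{p(n-1)}{p-1}}$. The tail integral converges exactly when $\frac{p(n-1)}{p-1}>n$, which is equivalent to $p(n-1)>n(p-1)$, that is $n>p$, and this holds. Combined with the local $W^{1,p}$ bound on $\Omega^{\text{ext}}\cap B_R$, this gives $\int_{\Omega^{\text{ext}}}|\nabla u|^p<\infty$.

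I would also keep an alternative route in reserve. Test the equation against $u\eta_R$, with $\eta_R$ a cutoff equal to $1$ on $B_R$ and supported in $B_{2R}$. This gives
\[
\int_{\Omega^{\text{ext}}}\eta_R|\nabla u|^p=\sigma\int_{\partial\Omega}|u|^p-\int|\nabla u|^{p-2}\nabla u\cdot\nabla\eta_R\,u.
\]
With the decay bounds, the last term is $O(R^{n-1-\beta-(p-1)(\beta+1)})=O(R^{-\beta})\to0$, which even yields the energy identity.

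The main obstacle is the gradient estimate. It requires the scale-invariant form of the interior gradient bound for $p$-harmonic functions, namely $\sup_{B_{\rho/2}}|\nabla v|\le \frac{C}{\rho}\sup_{B_\rho}|v|$. This is standard but needs citing, via DiBenedetto or a Caccioppoli plus Lipschitz argument. A lesser issue is the precise sense in which $\nabla u$ is $L^p$ near $\partial\Omega$, which I would take from the weak formulation of the Steklov condition. The comparison principle, Lemma~\ref{f11}, could be used to sandwich $u$ between $\pm C|x|^{-\beta}$ if only the order of decay were known asymptotically, but the third line of \eqref{f9} already gives this directly.
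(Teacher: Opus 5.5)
Your argument is correct, but it takes a different route from the paper's. The paper never estimates $\nabla u$ directly. It takes the boundary trace $f$ of $u$ and invokes the Auchmuty--Han existence theorem to get the finite-energy $p$-harmonic extension $U\in E^{1,p}(\Omega^{\text{ext}})$ of $f$. It then uses Proposition~\ref{prop:decay-estimate} to get $U=O(|x|^{-\beta})$, and applies the exterior comparison principle (Lemma~\ref{f11}) to $u$ and $U$: they agree on $\partial\Omega$ and $u-U\to0$ at infinity, so $u=U$. That uniqueness argument needs only $u\to0$ at infinity, not the rate. It also gives finite energy near $\partial\Omega$ for free once $u=U$. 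Its cost is reliance on the existence theory in $E^{1,p}$, and the tacit assumption that the trace of $u$ lies in $W^{1-\frac1p,p}(\partial\Omega)$ and is attained in a sense compatible with the comparison principle.

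Your proof is self-contained and quantitative. The scale-invariant interior gradient bound turns the prescribed rate $|x|^{-\beta}$ into $|\nabla u|=O(|x|^{-\frac{n-1}{p-1}})$, which is $L^p$ at infinity exactly because $p<n$. Your cutoff computation, whose error term is $O(R^{-\beta})$ since $(p-1)(\beta+1)=n-1$, even yields the energy identity $\int_{\Omega^{\text{ext}}}|\nabla u|^p\,\mathrm{d}x=\sigma\int_{\partial\Omega}|u|^p\,\mathrm{d}S$, which the paper does not record. On the other hand, you use the decay rate in an essential way, and you assume rather than derive $u\in W^{1,p}(\Omega^{\text{ext}}\cap B_R)$ near $\partial\Omega$. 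That is the natural reading of the weak Steklov condition, and it is no stronger than what the paper implicitly needs to speak of the trace $f$.
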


\begin{proof}
By the results of \cite{AH2014, Han2016}, there exists an eigenpair $ (\sigma_1, u_1) \in \mathbb{R}\times E^{1,p}(\Omega^{\text{ext}})$ satisfying
\begin{equation}
\begin{cases}
-\Delta_p u = 0 \quad &\text{ in } \ \Omega^{\text{ext}}, \\
|\nabla u|^{p-2} \frac{\partial u}{\partial \nu} = \sigma |u|^{p-2} u  &\text{ on } \  \partial\Omega.
\end{cases}
\end{equation}
Then, it is clear that $ u_1\in L^{p^*}(\Omega^{\text{ext}})$. According to the proof of Proposition~\ref{prop:decay-estimate}, we obtain that $ u_1(x) = O\left(|x|^{-\frac{n-p}{p-1}}\right)$ as $|x| \to \infty$. Therefore, the solution set of problem \eqref{f9} is nonempty. 

Then, we prove that all solutions of \eqref{f9} belong to $ E^{1,p}(\Omega^{\text{ext}})$. Let $ u(x)$ be a solution of \eqref{f9} and let $ f$ be the restriction of $ u$ to $ \partial \Omega$. It follows from \cite{AH2014} that there is a solution $ U \in E^{1,p}(\Omega^{\text{ext}})$ satisfying
\begin{equation}\label{f10}
\begin{cases}
-\Delta_p u = 0 \quad &\text{ in } \ \Omega^{\text{ext}}, \\
u = f  &\text{ on } \  \partial\Omega.
\end{cases}
\end{equation}
Similarly, we deduce that $ U(x) = O\left(|x|^{-\frac{n-p}{p-1}}\right)$ at $ \infty$. 

Since $ u(x) \in W^{1,p}_{\text{loc}}(\Omega^{\text{ext}})$ is a solution of \eqref{f9}, there exist positive constants $ C_1$, $ C_2$ and $ R_1$ such that 
\[
|u(x)| \leq C_1 |x|^{-\beta}, \quad |U(x)| \leq C_2 |x|^{-\beta}, \qquad |x| \geq R_1,
\]
where $ \beta:= \frac{n-p}{p-1}$. Thus, 
\[
|u(x) - U(x)| \leq (C_1 + C_2) |x|^{-\beta} \rightarrow 0, \quad \text{as} \quad |x| \rightarrow \infty. 
\]
Since $ u(x) = U(x)$ on $ \partial \Omega$, by using Lemma \ref{f11}, we have $ u(x) = U(x)$. Then, $ u(x) \in E^{1,p}(\Omega^{\text{ext}})$.
\end{proof}

\section{Estimate of exterior Steklov eigenvalue}\label{sec:eigenvalue}
In this section, we study the exterior Steklov problem and obtain upper and lower estimates for the first variational eigenvalue $ \sigma_{1,p}$. Inspired by the proof of Theorem 1.11 in \cite{LADMI2026}, we provide the following lower bound for the first variational eigenvalue of the exterior Steklov problem for the $ p$-Laplace equation. That is, for any $1 < p <n$,
\begin{equation}
\sigma_{1,p}(\Omega^{\text{ext}}) \geq \inf_{s\in \partial \Omega} \left[\frac{n-p}{p-1}\mathcal{L}_p(\kappa_1(s), \cdots, \kappa_{n-1}(s))\right]^{p-1},
\end{equation}
where $ \kappa_j(s)$ is the $ j$-th principal curvature of $ \partial \Omega$.

\begin{proof}[Proof of Theorem~\ref{f8}] 
Since $\Omega$ is a convex domain, we know that $ \prod_{j=1}^{n-1} \kappa_j(s) \geq 0$. Assume that there exist $ q$ of the $ \{\kappa_j(s) \}_{j=1}^{n-1}$ that are positive. If $ q \leq p-1$, then by convention, the logarithmic mean $ \mathcal{L}_p(\kappa_1(s), \cdots, \kappa_{n-1}(s)) =0$. Thus, in the following we consider $ p-1 < q \leq n-1$. 

From the definition of eigenvalues, we know that
\[
 \sigma_{1,p}(\Omega^{\text{ext}}) = \inf_{\substack{u \in E^{1,p}(\Omega^{\text{ext}}) \\ u \neq 0}} \frac{\displaystyle\int_{\Omega^{\text{ext}}} |\nabla u|^p \, \mathrm{d}x}{\displaystyle\int_{\partial \Omega} |u|^p \, \mathrm{d}S}.
\]
Inspired by the method in \cite[Theorem 1.1]{KP2017}, we consider the map 
\[
\Psi: \partial \Omega \times (0,\infty) \rightarrow \Omega^{\text{ext}}, \qquad (s,t) \mapsto s -t \nu(s).
\]
$ \Psi $ is bijective and locally bi-Lipschitz. $\nu(s) $ is the unit normal pointing towards the exterior of $\Omega^{\text{ext}}$.  Any $ u\in E^{1,p} $ can be approximated by functions with compact support in $ \Omega^{\text{ext}}$.   
A change of variables yields
\[
\int_{\Omega^{\text{ext}}} |\nabla u|^p \, \mathrm{d}x = \int_{\partial \Omega \times (0,\infty)} |\nabla u( \Psi (s,t))|^p \det (D\Psi)\, \mathrm{d}S\mathrm{d}t,
\]
where the Jacobian determinant is given by 
\[
\zeta(s, t) := \det(D\Psi) = \prod_{j=1}^{n-1} \left(1+ \kappa_j(s)t \right).
\]
Define $ \omega(s,t) := u(\Psi(s,t))$. By the chain rule, 
\[
\partial_t \omega(s,t) = - \langle \nabla u( \Psi (s,t)), \nu(s) \rangle
\] 
and then 
\[
|\nabla u( \Psi (s,t))| \geq | \langle \nabla u( \Psi (s,t)), \nu(s) \rangle| = |\partial_t \omega(s,t)|.
\] 
It is clear that
\[
\int_{\Omega^{\text{ext}}} |\nabla u|^p \, \mathrm{d}x \geq \int_{\partial \Omega \times (0,\infty)} |\partial_t \omega(s,t)|^p \zeta(s, t) \, \mathrm{d}S\mathrm{d}t.
\]
Hence, we obtain 
\[
\sigma_{1,p}(\Omega^{\text{ext}}) \geq \inf_{0\neq u \in E^{1,p}(\Omega^{\text{ext}})} \frac{\displaystyle\int_{\partial \Omega \times (0,\infty)} |\partial_t \omega(s,t)|^p \zeta(s, t)\, \mathrm{d}S\mathrm{d}t}{\displaystyle\int_{\partial \Omega} |\omega(s,0)|^p \, \mathrm{d}S}. 
\]

For every $s \in \partial \Omega$, define
\begin{equation}\label{equ:definition_of_K}
 K_p(s, \Omega^{\text{ext}}):= \inf_{\substack{0\neq f \in W_{\text{loc}}^{1, p}((0, \infty)) \\ \lim\limits_{x \to \infty}f(x) = 0}}\frac{\displaystyle\int_0^{\infty}|f'(t)|^p \zeta(s, t)\, \mathrm{d}t}{|f(0)|^p},
\end{equation}
which gives
\[
\sigma_{1,p}(\Omega^{\text{ext}}) \geq \inf_{0\neq u \in E^{1,p}(\Omega^{\text{ext}})} \frac{ \displaystyle\int_{\partial \Omega }K_p(s, \Omega^{\text{ext}})  |\omega(s,0)|^p \, \mathrm{d}S}{\displaystyle\int_{\partial \Omega} |\omega(s,0)|^p \, \mathrm{d}S} \geq \inf_{s\in \partial \Omega} K_p(s, \Omega^{\text{ext}}). 
\]

For \eqref{equ:definition_of_K}, we consider a minimizing sequence $\{f_k\}_{k \in \mathbb{N}} \subset W^{1, p}_{\text{loc}}((0, \infty))$ with $f_k(0) = 1$, $f_k(x) \to 0$ as $x \to \infty$ and $\lim\limits_{k \to \infty}\displaystyle\int_0^{\infty}|f_k'(t)|^p \zeta(s, t) \, \mathrm{d}t = K_p(s, \Omega^{\text{ext}})$. For each $R > 0$, $\zeta(s, t) \geq 1$ imply that the sequence is bounded in $W^{1, p}((0, R))$. Hence, by standard compactness results, there exists a subsequence, again denoted by $f_k$, that converges weakly in $W^{1, p}((0, R))$ to a function $f \in W^{1, p}((0, R))$. 

We then show that $f$ decays at infinity. Since $ q$ of the principle curvatures are positive, there exists a constant $D_1(s) > 0$ such that $\zeta(s, t) \geq D_1(s)t^{q}$. Thus, for any $x \in (0, \infty)$,
\begin{align*}
 \int_x^{\infty}\left(\frac{1}{\zeta(s, t)}\right)^{\frac{1}{p-1}} \, \mathrm{d}t &\leq \left(\frac{1}{D_1(s)}\right)^{\frac{1}{p-1}}\int_x^{\infty}t^{-\frac{n-1}{p-1}} \, \mathrm{d}t \\
 &= \left(\frac{1}{D_1(s)}\right)^{\frac{1}{p-1}} \frac{p-1}{q-(p-1)}x^{\frac{p-1-q}{p-1}}.
\end{align*}
It follows that there exists a constant $D_2(s, p, q) > 0$ such that for any $x \in (0, \infty)$, 
\begin{align*}
 |f_k(x)| &\leq \Big|\int_x^{\infty}f'_k(t) \, \mathrm{d}t \Big| \leq \int_x^{\infty}|f'_k(t)|\zeta^{\frac{1}{p}}(s, t)\zeta^{-\frac{1}{p}}(s, t) \, \mathrm{d}t \\
 &\leq\left(\int_x^{\infty}|f'_k(t)|^p\zeta(s, t) \, \mathrm{d}t\right)^{\frac{1}{p}}\left(\int_x^{\infty}\left(\zeta^{-\frac{1}{p}}(s, t)\right)^{\frac{p}{p-1}} \, \mathrm{d}t\right)^{\frac{p-1}{p}}\\
 &\leq D_2(s, n, p)x^{\frac{p-1-q}{p}}.
\end{align*}
The condition $ q>p-1$ and the uniform decay imply that $f(x) \to 0$ as $x \to \infty$. Since $f$ is not a constant, we have $K_p(s, \Omega^{\text{ext}}) > 0$. Moreover, $f$ satisfies the following Euler-Lagrange equation
\begin{equation} \label{ELequa}
 \begin{cases}
  (|f'(t)|^{p-2} f'(t) \zeta (s,t) )' = 0,  \quad \text{ for }  t\in (0, \infty), \\
  -|f'(0)|^{p-2} f'(0) =  |f(0)|^{p-2} f(0) K_p(s,\Omega^{\text{ext}}), \\
  f(t) \rightarrow 0$ \text{ as } $ t\to \infty.
 \end{cases}
\end{equation}

Define a function $\Phi: \mathbb{R} \to \mathbb{R}$ as 
\[
 \Phi(\xi) = |\xi|^{p-2}\xi.
\]
Clearly, 
\[
 \Phi^{-1}(y) = \mathrm{sgn}(y)|y|^{\frac{1}{p-1}}.
\]
Then, $(|f'(t)|^{p-2} f'(t) \zeta (s,t) )' = 0$ implies that there exists $C_1(s)$ such that 
\[
 |f'(t)|^{p-2}f'(t) = \frac{C_1(s)}{\zeta(s, t)},
\]
and thus,
\begin{align*}
 f'(t) &= \mathrm{sgn}\left(\frac{C_1(s)}{\zeta(s, t)}\right)\Big|\frac{C_1(s)}{\zeta(s, t)}\Big|^{\frac{1}{p-1}} = \mathrm{sgn}(C_1(s))|C_1(s)|^{\frac{1}{p-1}}\zeta^{-\frac{1}{p-1}}(s, t)\\
 &=C_2(s)\zeta^{-\frac{1}{p-1}},
\end{align*}
with $C_2(s) = \mathrm{sgn}(C_1(s))|C_1(s)|^{\frac{1}{p-1}}$. It follows that
\begin{equation}\label{n1}
 f(t) = C_3(s) + C_2(s)\int_0^t\zeta^{-\frac{1}{p-1}}(s, r) \, \mathrm{d}r.
\end{equation}
The condition $\lim\limits_{t \to \infty}f(t) = 0$ implies that 
\[
 \frac{C_2(s)}{C_3(s)} = -\frac{1}{\displaystyle \int_0^{\infty}\zeta^{-\frac{1}{p-1}}(s, r) \, \mathrm{d}r} = -\frac{1}{I_p(s)}, \qquad I_p(s) := \int_0^{\infty}\zeta^{-\frac{1}{p-1}}(s, t) \, \mathrm{d}t. 
\]
Hence, by \eqref{ELequa}-\eqref{n1}
\[
 K_p(s, \Omega^{\text{ext}}) = -\frac{|f'(0)|^{p-2}f'(0)}{|f(0)|^{p-2}f(0)} = -\frac{|C_2(s)|^{p-2}C_2(s)}{|C_3(s)|^{p-2}C_3(s)} = I_p(s)^{1-p}.
\]

We claim that
\begin{equation}\label{equ:Dirichlet_equality}
 \zeta(s, t)^{-\frac{1}{p-1}} = \frac{\Gamma\left(\frac{n-1}{p-1}\right)}{\Gamma\left(\frac{1}{p-1}\right)^{n-1}}\int_{S_{n-2}}\frac{\prod_{j=1}^{n-1}u_j^{\frac{1}{p-1}-1}}{(1 + t\sum_{j=1}^{n-1}\kappa_j(s)u_j)^{\frac{n-1}{p-1}}} \, \mathrm{d}u_1\cdots \mathrm{d}u_{n-2},
\end{equation}
where $S_{n-2} = \{u_j \geq 0: \sum_{j=1}^{n-1}u_j = 1\}$. 

Recall that 
\[
 \zeta(s, t) = \prod_{j = 1}^{n-1}(1 + \kappa_j(s)t).
\]
Indeed, for any $a, b > 0$, since
\[
 \int_0^{\infty}x^{a-1}e^{-bx} \, \mathrm{d}x = \frac{\Gamma(a)}{b^a},
\]
we have
\[
 (1 + \kappa_j(s)t)^{-\frac{1}{p-1}} = \frac{1}{\Gamma\left(\frac{1}{p-1}\right)}\int_0^{\infty}x_j^{\frac{1}{p-1}-1}e^{-(1 + \kappa_j(s)t)x_j} \, \mathrm{d}x_j.
\]
It follows that 
\begin{align*}
  &\mathrel{\phantom{=}}\prod_{j=1}^{n-1}(1 + \kappa_j(s)t)^{-\frac{1}{p-1}}\\
  &= \frac{1}{\Gamma\left(\frac{1}{p-1}\right)^{n-1}}\int_0^{\infty}\cdots\int_0^{\infty}\left(\prod_{j=1}^{n-1}x_j^{\frac{1}{p-1}-1}\right)e^{-\sum_{j=1}^{n-1}(1 + \kappa_j(s)t)x_j} \, \mathrm{d}x_1\cdots \mathrm{d}x_{n-1} \\
  &= \frac{1}{\Gamma\left(\frac{1}{p-1}\right)^{n-1}}\int_0^{\infty}\cdots\int_0^{\infty}\left(\prod_{j=1}^{n-1}x_j^{\frac{1}{p-1}-1}\right)e^{-\sum_{j=1}^{n-1}x_j}e^{-t\sum_{j=1}^{n-1}\kappa_j(s)x_j} \, \mathrm{d}x_1\cdots \mathrm{d}x_{n-1}.
\end{align*}
Let $T = \sum_{j = 1}^{n-1}x_j$ and $u_j = \frac{x_j}{T}$. Then 
\[
 \mathrm{d}x_1\cdots \mathrm{d}x_{n-1} = T^{n-2}\, \mathrm{d}T\mathrm{d}u_1\cdots\mathrm{d}u_{n-2}.
\]
Thus, 
\begin{align*}
 &\mathrel{\phantom{=}}\prod_{j=1}^{n-1}(1 + \kappa_j(s)t)^{-\frac{1}{p-1}}\\
 &=\frac{1}{\Gamma\left(\frac{1}{p-1}\right)^{n-1}}\int_{S_{n-2}}\left(\prod_{j=1}^{n-1}u_j^{\frac{1}{p-1}-1}\right)\int_0^{\infty}T^{\frac{n-1}{p-1}-1}e^{-T}e^{-tT\sum_{j=1}^{n-1}\kappa_j(s)u_j} \, \mathrm{d}T\mathrm{d}u_1\cdots\mathrm{d}u_{n-2} \\
 &= \frac{1}{\Gamma\left(\frac{1}{p-1}\right)^{n-1}}\int_{S_{n-2}}\left(\prod_{j=1}^{n-1}u_j^{\frac{1}{p-1}-1}\right)\frac{\Gamma\left(\frac{n-1}{p-1}\right)}{(1 + t\sum_{j=1}^{n-1}\kappa_j(s)u_j)^{\frac{n-1}{p-1}}} \, \mathrm{d}u_1\cdots\mathrm{d}u_{n-2},
\end{align*}
i.e., \eqref{equ:Dirichlet_equality} holds. Integrating both sides of \eqref{equ:Dirichlet_equality} with respect to $t$ over $ (0, +\infty)$, we obtain
\[
 I_p(s) = \int_0^{\infty}\zeta^{-\frac{1}{p-1}}(s, t) \, \mathrm{d}t = \frac{1}{\frac{n-p}{p-1}\mathcal{L}_p(\kappa_1(s), \cdots, \kappa_{n-1}(s))},
\]
where the definition of logarithmic mean
\[
 \mathcal{L}_p(\kappa_1(s), \cdots, \kappa_{n-1}(s)):= \left[\frac{\Gamma\left(\frac{n-1}{p-1}\right)}{\Gamma\left(\frac{1}{p-1}\right)^{n-1}}\int_{S_{n-2}}\frac{\prod_{j=1}^{n-1}u_j^{\frac{1}{p-1}-1}}{\sum_{j=1}^{n-1}\kappa_j(s)u_j} \, \mathrm{d}u_1\cdots \mathrm{d}u_{n-2}\right]^{-1}.
\]

Thus, 
\[
 K_p(s, \Omega^{\text{ext}}) = \left[\frac{n-p}{p-1}\mathcal{L}_p(\kappa_1(s), \cdots, \kappa_{n-1}(s))\right]^{p-1}.
\]
Since $ \sigma_{1,p}(\Omega^{\text{ext}}) \geq \inf_{s\in \partial \Omega} K_p(s, \Omega^{\text{ext}})$, we get the desired result. 
\end{proof}

Then, we prove an upper bound estimate for the first variational eigenvalue $ \sigma_{1, p}(\Omega^{\text{ext}})$.

\begin{proof}[Proof of Theorem~\ref{thm:main_upper}]
 For any $\varepsilon > 0$, there exists $A \subset \partial \Omega$ such that 
 \[
  \frac{\mathrm{Cap}_p(A; \partial \Omega)}{\mathrm{Area}(A)} \leq \Lambda_p(\partial \Omega) + \varepsilon.
 \]
 By the definition of capacity, there exists a function $g$ satisfying $g \in C^1(\Omega^{\text{ext}}) \cap C(\overline{\Omega^{\text{ext}}})$, $\mathrm{supp}(g)$ is bounded and $g|_A = 1$ such that 
 \[
  \int_{\Omega^{\text{ext}}}|\nabla g|^p \, \mathrm{d}x \leq \mathrm{Cap}_p(A; \partial \Omega) + \varepsilon \, \mathrm{Area}(A).
 \]
 Clearly, $g \in E^{1, p}(\Omega^{\text{ext}})$. Then we have
 \begin{align*}
  \sigma_{1, p}(\Omega^{\text{ext}}) &\leq \frac{\int_{\Omega^{\text{ext}}}|\nabla g|^p \, \mathrm{d}x}{\int_{\partial \Omega}|g|^p \, \mathrm{d}S} \leq \frac{\mathrm{Cap}_p(A; \partial \Omega) + \varepsilon \, \mathrm{Area}(A)}{\int_{\partial \Omega}|g|^p \, \mathrm{d}S} \\
  &\leq \frac{\mathrm{Cap}_p(A; \partial \Omega) + \varepsilon \, \mathrm{Area}(A)}{\mathrm{Area}(A)} \leq \Lambda_p(\partial \Omega) + 2\varepsilon.
 \end{align*}
Letting $\varepsilon \to 0$ yields
 \[
  \sigma_{1, p}(\Omega^{\text{ext}}) \leq \Lambda_p(\partial \Omega).
 \]
 This completes the proof.
\end{proof}

By virtue of the results on the upper and lower bounds for the first variational eigenvalue $ \sigma_{1, p}$, we obtain the exact value of the first eigenvalues for $ B_R^{\text{ext}}$. This shows that the estimates in Theorem \ref{f8} and Theorem \ref{thm:main_upper} are sharp.

\begin{example}\label{exm:sharp}
 Taking $t = 0$ in \eqref{equ:Dirichlet_equality}, we have
 \[
  \frac{\Gamma\left(\frac{n-1}{p-1}\right)}{\Gamma\left(\frac{1}{p-1}\right)^{n-1}}\int_{S_{n-2}}\prod_{j=1}^{n-1}u_j^{\frac{1}{p-1}-1} \, \mathrm{d}u_1\cdots \mathrm{d}u_{n-2} = 1.
 \]
 If $\Omega = B_R$ for some $R > 0$, then for any $s \in \partial B_R$ and $j \in \{1, \cdots, n-1\}$, $\kappa_j(s) = \frac{1}{R}$. It follows that 
 \[
  \mathcal{L}_p(\kappa_1(s), \cdots, \kappa_{n-1}(s)) = \frac{1}{R}.
 \]
 By using Theorem \ref{f8}, one obtains 
 \[
 \sigma_{1, p}(B_R^{\text{ext}}) \geq  K_p(s, B_R^{\text{ext}}) = \left(\frac{n-p}{R(p-1)}\right)^{p-1}.
 \]
 On the other hand, by classical capacity theory, 
 \[
  \mathrm{Cap}_p(\partial B_R; \partial B_R) = \omega_n \left(\frac{n-p}{p-1}\right)^{p-1}R^{n-p},
 \]
 where $\omega_n$ is the $(n-1)$-dimensional measure of $\mathbb{S}^{n-1}$; see e.g., \cite[Section 2.2.4]{Mazya2011}. Then by Theorem~\ref{thm:main_upper}, we have
 \[
  \sigma_{1, p}(B_R^{\text{ext}}) \leq \frac{\mathrm{Cap}_p(\partial B_R; \partial B_R)}{\mathrm{Area}(\partial B_R)} = \frac{\omega_n \left(\frac{n-p}{p-1}\right)^{p-1}R^{n-p}}{\omega_n R^{n-1}} = \left(\frac{n-p}{R(p-1)}\right)^{p-1}.
 \]
 Hence,
 \[
  \sigma_{1, p}(B_R^{\text{ext}}) = \left(\frac{n-p}{R(p-1)}\right)^{p-1}.
 \]
\end{example}

In particular, for the case $ R=1$ (which yields $ \sigma_{1, p}(B_1^{\text{ext}}) = \left(\frac{n-p}{p-1}\right)^{p-1}$), Han provided the relevant results in \cite{Han2016} and derived the corresponding eigenfunction $ u_1 = |x|^{\frac{p-n}{p-1}}.$ Correspondingly, the eigenfunction is given by $ u_R= \left(\frac{R}{|x|}\right)^{\frac{n-p}{p-1}}$ for the domain $ B_R^{\text{ext}}$. 
 
\noindent \textbf{Acknowledgments.} \ The first author is supported by NSFC of China (No.12501266), Natural Science Foundation of Jiangsu Province (No.BK20250703). The second author is partially supported by the National Key R\&D Program of China (No.2025YFA1017500). The authors sincerely appreciate the editors and referees for their careful reading and helpful comments to improve this paper.

\bibliographystyle{plain}
\bibliography{plap}

\end{document}